\newcommand{\FF}{\mathbf F}
\newcommand{\ZZ}{\mathbf Z}
\newcommand{\RR}{\mathbf R}
\newcommand{\NN}{\mathbf N}
\theoremstyle{Theorem}
\newtheorem{thm}{Theorem}[section]
\newtheorem*{mainthm}{Theorem}
\newtheorem{cor}[thm]{Corollary}
\newtheorem{lem}[thm]{Lemma}
\theoremstyle{definition}
\newtheorem{dff}[thm]{Definition}
\newtheorem{xmp}[thm]{Example}
\newtheorem{rmk}[thm]{Remark}
\def\HK{\textnormal{HK}}
\def\fm{\mathfrak{m}}
\def\cH{\mathcal{H}}
\newcommand\sIJe[1]{I^{\lceil sp^{#1} \rceil} + J^{[p^{#1}]}}
\def\dotdiv{\ \hbox{\.{-\!-\!-\ }}}
\def\dotdiv{\, {\begin{picture}(1,1)(-1,-2)\put(2.5,3){\circle*{1.6}}\linethickness{.3mm}\line(1,0){5}\end{picture}}\ \ }
\colorlet{DG}{green!50!black}
\colorlet{DB}{blue!50!black}
\title{The $s$-multiplicity function of $2 \times 2$-determinantal rings}
\author{Lance Edward Miller and William D.\ Taylor}
\begin{document}

\maketitle 

\begin{abstract}
This article generalizes joint work of the first author and I. Swanson to the $s$-multiplicity recently introduced by the second author. For $k$ a field and $X = [ x_{i,j}]$ a $m \times n$-matrix of variables, we utilize Gr\"obner bases to give a closed form the length $\lambda( k[X] / (I_2(X) + \fm^{ \lceil sq \rceil} + \fm^{[q]} ))$ where $s \in \ZZ[p^{-1}]$, $q$ is a sufficiently large power of $p$, and $\fm$ is the homogeneous maximal ideal of $k[X]$. This shows this length is always eventually a {\it polynomial} function of $q$ for all $s$. 
\end{abstract}

\section{Introduction}

\

One of the most well studied and intriguing invariants for positive characteristic commutative algebra is the Hilbert-Kunz multiplicity. Specifically in a local ring $(R,\fm,k)$, where $k$ has positive characteristic, the length $\lambda(R/\fm^{[q]}) = e_{\HK}(R) q^d + O(q^{d-1})$ as was first shown by P. Monsky \cite{Mon83} building on work of E. Kunz. Much subtly lies in the lower order terms. When $R$ is excellent, normal, and with perfect residue field, there is a sharper form $\lambda(R/\fm^{[q]}) = e_{\HK}(R) q^d + \beta q^{d-1} + O(q^{d-2})$ \cite{HMM04}. However, in contrast to the Hilbert-Samuel function, one cannot expect this length to be polynomial in $q$, even for nice rings in small dimensions. Despite its complication, the first author and I. Swanson showed that this length function is a polynomial in $q$ for $R$ the determinantal ring defined by $2$-minors \cite{MS13}. The techniques there in are combinatorial in nature, building on work of K. Eto  and K.-i. Yoshida \cite{EY03}, and were pushed later on by I. Swanson and M. Robinson to give a closed form as a sum of products of binomial coefficients, yielding a complete understanding of the Hilbert-Kunz function of such rings. 

Recently, the second author introduced a type of interpolation between Hilbert-Samuel and Hilbert-Kunz multiplicities. Specifically for $s$ a positive real number, the $s$-multiplicities $e_s(R)$ form a continuous family of real numbers agreeing with the Hilbert-Samuel multiplicity $e(R)$ for small values of $s$ and agreeing with the Hilbert-Kunz multiplicity $e_{\HK}(R)$ for large values of $s$. These arise as suitable normalizations of the limit  $\lim_{q \to \infty} q^{-d} \lambda(R/ (\fm^{\lceil sq \rceil} + \fm^{[q]}))$. which is known to exist \cite{Tay}. This family offers an important hope to deform results from one multiplicity to another. Standing in the way are the multiplicities $e_s(R)$ which agree with neither the Hilbert-Samuel nor Hilbert-Kunz multiplicities, and so far these intermediate values are not well understood.

Fixing $s \in \ZZ[p^{-1}]$, for small values of $s$, the length $\lambda(R/ (\fm^{\lceil sq \rceil} + \fm^{[q]}))$ is eventually a polynomial in $q$ and for large values of $s$ the length can be significantly more complicated. However, when $R$ is the determinantal ring defined by $2$-minors, this length function is eventually a polynomial in $q$ for large values of $s$ too. The purpose of this short article is show in such case, this length function is eventually a polynomial in $q$ for all such $s$ and to give a closed form for it similar to \cite{RS15}. The final form of this is our main theorem, which is a sum of products of binomials and involves the monus operator, denoted $\dotdiv$ and defined by $a\dotdiv b=\max\{a-b,0\}$.  In this theorem and throught the paper, unrestriced sums are interpreted as being over all integers.

\begin{mainthm} [Theorem~\ref{thm:monomialcount}] Fix $k$ a field, $p$ an integer and $q$ a $p$-power. Let $X$ be an $m \times n$-matrix of variables, $\fm$ the homogeneous maximal ideal of $k[X]$ and $I_2(X)$ the ideal of $2 \times 2$-minors. Let $s\in \RR_{>0}$ such that $sq\in \ZZ$, and set
$$R(m,n,s,q) := \sum_{a}\sum_{b}\sum_{\ell}\binom{m-1}{a}\binom{n-1}{b}\binom{sq+\ell}{a+b+1}\binom{a}{\ell}\binom{b}{\ell}$$
 $$S(m,n,s,q) := \sum_{i>0}\sum_{j>0}\sum_{a}\sum_{b}\sum_{\ell}(-1)^{i+j}\binom{m}{i}\binom{n}{j}\binom{(j\dotdiv i)q+m-1}{m-1-a}\binom{(i \dotdiv j)q+n-1}{n-1-b}\binom{(s-\max\{i,j\})q+\ell}{a+b+1}\binom{a}{\ell}\binom{b}{\ell}.$$ The length $\lambda\left(\frac{k[X]}{\mathfrak{m}^{\lceil sq \rceil}+\mathfrak{m}^{[q]}+I_2(X)}\right) = R(m,n,s,q) - S(m,n,s,q)$. Notably, this is a polynomial in $q$. 
\end{mainthm}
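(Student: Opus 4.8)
The plan is to compute the length as a count of monomials surviving in the quotient, using a Gr\"obner basis of $I_2(X)$ together with the two monomial ideals. With respect to a diagonal term order, the $2\times 2$ minors $x_{i,j}x_{k,l}-x_{i,l}x_{k,j}$ (for $i<k$, $j<l$) form a Gr\"obner basis whose initial ideal is the squarefree monomial ideal $(x_{i,j}x_{k,l}: i<k,\ j<l)$. Since $\fm^{\lceil sq\rceil}$ and $\fm^{[q]}$ are already monomial, the first step is to verify that adjoining their generators still yields a Gr\"obner basis, so that
$$\mathrm{in}\bigl(I_2(X)+\fm^{\lceil sq\rceil}+\fm^{[q]}\bigr)=(x_{i,j}x_{k,l}: i<k,\ j<l)+\fm^{\lceil sq\rceil}+\fm^{[q]}.$$
Because length is preserved on passing to the initial ideal of a homogeneous ideal, $\lambda$ then equals the number of monomials $\mu=\prod x_{i,j}^{e_{i,j}}$ whose support contains no pair $(i,j),(k,l)$ with $i<k$ and $j<l$ (the \emph{staircase} condition), of total degree $\deg\mu<sq$ (using $sq\in\ZZ$, so $\lceil sq\rceil=sq$), and with every exponent $e_{i,j}<q$.

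Next I would count, as the term $R(m,n,s,q)$, all staircase monomials of degree $<sq$ while \emph{ignoring} the exponent bound. The key is to stratify staircase supports by their combinatorial type: a staircase region is cut out by choosing which of the $m-1$ consecutive-row boundaries and which of the $n-1$ consecutive-column boundaries are descent steps, giving the factors $\binom{m-1}{a}\binom{n-1}{b}$, while $\ell$ records the number of corners where a row-step meets a column-step, contributing $\binom{a}{\ell}\binom{b}{\ell}$ (the pairing being forced by the linear order of steps along the staircase, which is why no $\ell!$ appears). For a fixed type, assigning exponents subject to the degree bound is a bounded weak-composition count equal to $\binom{sq+\ell}{a+b+1}$, and summing over $a,b,\ell$ produces $R$.

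The exponent bound $e_{i,j}<q$ is then imposed by inclusion-exclusion, producing the correction $-S(m,n,s,q)$. Over-large exponents are forced into restricted rows and columns by the staircase condition, so I would run inclusion-exclusion over the set of $i$ rows and $j$ columns carrying a Frobenius violation, yielding the sign $(-1)^{i+j}$ and the selection factors $\binom{m}{i}\binom{n}{j}$; subtracting $q$ from the relevant degree and position budgets shifts the binomial arguments by $(j\dotdiv i)q$, $(i\dotdiv j)q$, and $(s-\max\{i,j\})q$. The monus operator $\dotdiv$ is exactly what is needed so that a shift is read as $0$ rather than negative once a budget is exhausted, which is what makes the alternating sum count each excluded monomial with the correct multiplicity. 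I expect this simultaneous bookkeeping --- matching the monus-truncations in $S$ to the geometry of staircases that violate several row and column Frobenius constraints at once --- to be the main obstacle.

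Finally, to see the result is a polynomial in $q$, note each summand $\binom{(\text{linear in }q)+\ell}{\,a+b+1\,}$ has constant lower index, hence is polynomial in $q$; since $i,j,m,n$ are fixed integers the coefficients $j\dotdiv i$ and $i\dotdiv j$ are constants and $\max\{i,j\}$ contributes only a linear shift, so for $q$ sufficiently large every binomial argument meant to be nonnegative is genuinely nonnegative, eliminating the discrepancy between the polynomial $\binom{x}{k}$ and the true monomial count. The finite sum $R-S$ is therefore a polynomial in $q$, completing the proof.
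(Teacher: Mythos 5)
There is a genuine gap at the very first step, and it propagates through the whole argument. The set consisting of the $2\times 2$ minors together with the generators of $\fm^{\lceil sq\rceil}+\fm^{[q]}$ is \emph{not} a Gr\"obner basis, and the surviving monomials are \emph{not} the staircase monomials with every exponent $e_{i,j}<q$. For example, with $m=n=2$ the monomial $x_{1,1}x_{1,2}^{q-1}x_{2,1}^{q-1}$ is a staircase monomial all of whose exponents are $<q$, yet modulo the minor it equals $x_{1,1}^{q}x_{2,2}^{q-1}\in\fm^{[q]}$, so it lies in $I_2(X)+\fm^{[q]}$ while lying outside your proposed initial ideal. The interaction between the minors and the Frobenius power is exactly the delicate point: the correct criterion (Corollary~\ref{cor:GBMaxIdeal}, resting on the Gr\"obner basis computation of Theorem~\ref{thm:mainGB} and \cite{MS13}) is that a staircase monomial survives in the quotient iff its degree is $<\lceil sq\rceil$ and \emph{either} all of its row sums are $<q$ \emph{or} all of its column sums are $<q$ --- a condition on row and column sums of the exponent matrix, not on individual exponents.

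Because the constraint is a union of two events, the inclusion--exclusion has a different shape from the one you propose: the count is $T(m,n,sq,q)+T(n,m,sq,q)-U(m,n,sq,q)$, where $T$ counts staircase monomials (equivalently, by \cite[Lem.~2.4]{RS15}, pairs of row-sum and column-sum tuples with equal totals $<sq$) with all row sums $<q$, and $U$ imposes both bounds. Your proposed exclusion over ``rows and columns carrying a Frobenius violation'' of individual exponents does not correspond to this structure and will not produce $S$; in the paper $S$ arises as the $i>0$, $j>0$ part of $-U$ after the $i=0$ and $j=0$ strata of $U$ cancel against the two $T$ terms. Your reading of $R$ as the count of all staircase monomials of degree $<sq$ is correct in spirit, but the closed form is obtained not by stratifying staircase supports directly (your sketch of the $\binom{a}{\ell}\binom{b}{\ell}$ factors is left unresolved and flagged as ``the main obstacle'') but by summing $\binom{d+m-1}{m-1}\binom{d+n-1}{n-1}$ over $d<sq$ and applying the identity $\sum_{w}\binom{c+w}{a+b}\binom{a}{w}\binom{b}{w}=\binom{c}{a}\binom{c}{b}$ (Lemma~\ref{product of binom coeff}) together with Lemma~\ref{lem:UCount}. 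To repair the proposal you would need both the correct description of the monomial basis and this tuple-counting machinery.
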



{\bf Acknowledgments:} We thank P.\ Mantero and M.\ Johnson for helpful discussions and D. Juda for suggesting the technique used in one of the proofs of Lemma~\ref{product of binom coeff}. We deeply thank the referee for the careful and detailed review of the manuscript, which allowed us to improve it significantly. 

\section{Preliminaries}

Unless otherwise stated, $p$ always denotes a positive prime integer, $q$ a power of $p$, and $k$ a field of characteristic $p$. Throughout $s$ is a positive real number and $\lambda$ denotes length of a module.  The $s$-multiplicity, introduced in \cite{Tay}, is defined as follows. Fix a local ring $(R,\fm)$ of characteristic $p$ and two $\fm$-primary ideals $I$ and $J$, the following limit \cite[Thm. 2.1]{Tay} exists,  $$h_s(I,J) := \lim_{e \to \infty} \lambda( R / \sIJe{e} )/p^{ed}.$$ For small values of $s$, $h_s(I,J) = \frac{s^d}{d!} e(I)$ whereas for large values of $s$,  $h_s(I,J) = e_{\HK}(J)$. When $R$ is regular of dimension $d$, $\cH_s(d) := h_s(\fm,\fm) = \sum_{i=0}^{ \lfloor s \rfloor} \frac{ (-1)^i}{d!} \binom{d}{i} (s-i)^d$ offers a normalizing factor and one defines the $s$-multiplicity by $e_s(I,J) := h_s(I,J)/\cH_s(d)$.  We follow the usual conventions denoting $e_s(R) := e_s(\fm,\fm)$ and similarly for $h_s$. 

This article concerns the $s$-length functions $h_s(R)$ where $R$ is the quotient of a polynomial ring with defining ideal the $2 \times 2$-minors of a matrix of variables. The techniques follow similarly to \cite{MS13,RS15}. We first recall notation.

\begin{dff}\label{defstaircase}
We call a monomial $\prod_{i,j} x_{i,j}^{p_{i,j}}$
a {\bf staircase monomial}
if whenever $i < i'$ and $j < j'$,
then $p_{i,j} p_{i',j'} = 0$.
A staircase monomial is called a {\bf stair monomial}
if there exist $c \in \{1, \ldots, m\}$ and $d \in \{1, \ldots, n\}$
such that $p_{l,k} = 0$ whenever $(l-c)(k-d) \not = 0$.
Thus the indices $(i,j)$ for which $p_{i,j} \not = 0$
all lie in the union of part of row $c$ with part of column $d$,
either in a $\lefthalfcap$ or a $\righthalfcup$ configuration. A stair monomial
is called a {\bf $\bf q$-stair monomial} if for such $c, d$, $\sum_k p_{c,k} = q = \sum_k p_{k,d}$.
\end{dff}

\begin{rmk} Notice that monomials $p$ in $k[X]$ may be identified with integer valued $m \times n$-matrices by writing $p = \prod x_{i,j}^{p_{i,j}}$ and associating $p$ to $(p_{i,j})$. We call this its {\bf exponent matrix}. Staircase monomials $\prod_{i,j} x_{i,j}^{p_{i,j}}$ are so called as the indices $(i,j)$ for which $p_{i,j} \not = 0$ lie on a southwest-northeast staircase type pattern, i.e., their exponent matrices have support in a pattern like the following 
$$\left[
\begin{matrix}
 & & & & & \bullet & \bullet & \bullet \cr
 & & & & & \bullet & \cr
 & & & \bullet & \bullet & \bullet \cr
 & & & \bullet & \cr
 \bullet & \bullet & \bullet & \bullet \cr
\end{matrix}\right].
$$

Under this identification, the multiplicative semigroup of monomials is identified with the additive semigroup of non-negative integer valued matrices. We tacitly use this identification to keep the notation in the proofs to a minimum.
\end{rmk}

\noindent We start with an elementary lemma about staircase monomials implicit in the work \cite{MS13, RS15}. 

\begin{lem}\label{lem:minordivis} Let $X$ be a generic $m \times n$-matrix. 
\begin{enumerate}
\item Any monomial in $k[X]$ is equivalent to a staircase monomial modulo $I_2(X)$. 
\item If $p$ is a monomial and $q$ is a staircase monomial with $p \equiv q \bmod I_2(X)$, then $p$ has the same degree, row, and column sums as $q$. 
\end{enumerate}
\end{lem}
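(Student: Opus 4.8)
The plan is to exploit the single family of straightening relations that generate $I_2(X)$, namely $x_{i,j}x_{i',j'}-x_{i,j'}x_{i',j}$ for $i<i'$ and $j<j'$, together with the natural $\ZZ^m\times\ZZ^n$-multigrading on $k[X]$ in which $x_{i,j}$ is assigned degree $(\mathbf e_i,\mathbf e_j)$. Under the identification of monomials with exponent matrices, this multidegree records precisely the row and column sums, which is what part (2) is about, and each generating relation is a move on exponent matrices, which is what part (1) is about.

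For part (1), I would argue by termination of a rewriting process. If a monomial $p$ with exponent matrix $(p_{i,j})$ is not staircase, then there are indices $i<i'$ and $j<j'$ with $p_{i,j}>0$ and $p_{i',j'}>0$, so $p=x_{i,j}x_{i',j'}\,p'$ for some monomial $p'$, and the generating relation gives $p\equiv x_{i,j'}x_{i',j}\,p'\bmod I_2(X)$. This is again a monomial, of the same total degree, whose exponent matrix differs from that of $p$ only by subtracting $1$ at $(i,j)$ and $(i',j')$ and adding $1$ at $(i,j')$ and $(i',j)$. The key point is to supply a strictly monotone, bounded potential so that iterating this move must halt. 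I would use $\Phi(p):=\sum_{i,j}p_{i,j}\,i\,j$: a direct computation shows each move changes $\Phi$ by $(ij'+i'j)-(ij+i'j')=-(i'-i)(j'-j)<0$, so $\Phi$ strictly decreases at every step, while $\Phi\ge\deg p$ (since $i,j\ge 1$) is bounded below by the fixed quantity $\deg p$. Hence the process terminates after finitely many moves, necessarily at a monomial to which no relation applies, which is exactly the staircase condition.

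For part (2), rather than tracking moves I would use homogeneity: each generator $x_{i,j}x_{i',j'}-x_{i,j'}x_{i',j}$ is multihomogeneous of degree $(\mathbf e_i+\mathbf e_{i'},\mathbf e_j+\mathbf e_{j'})$, so $I_2(X)$ is a homogeneous ideal for the multigrading and every multihomogeneous component of an element of $I_2(X)$ again lies in $I_2(X)$. Given monomials $p$ and $q$ with $p-q\in I_2(X)$, if their multidegrees differed then the component of $p-q$ in multidegree $\deg p$ would be $p$ itself, forcing $p\in I_2(X)$; but $I_2(X)$ contains no monomial, since the all-ones point lies on $V(I_2(X))$ while no monomial vanishes there. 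Therefore $\deg p=\deg q$ in $\ZZ^m\times\ZZ^n$, and the two factors of this common multidegree are exactly the row-sum vector and the column-sum vector, whose sum is the total degree; the staircase hypothesis on $q$ is not even needed here.

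The only genuinely delicate point is the termination in part (1); everything in part (2) is formal once the multigrading is in place. I would expect the main obstacle to be choosing a potential that is simultaneously strictly monotone under every straightening move and bounded, and $\Phi(p)=\sum p_{i,j}\,ij$ works precisely because the anti-diagonal replacement always strictly lowers the sum of coordinate products. An alternative, perhaps closer to the paper's stated use of Gr\"obner bases, is to fix a term order for which the $2\times 2$ minors form a Gr\"obner basis with leading terms the diagonal monomials $x_{i,j}x_{i',j'}$; then the standard monomials are exactly the staircase monomials, part (1) becomes existence of a normal form, and part (2) follows from multihomogeneity exactly as above.
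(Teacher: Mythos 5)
Your proof is correct, and it takes a genuinely different route from the paper's on both parts. For part (1), the paper runs a two-level induction (on the number of columns, and then on the row indices meeting the last column), explicitly describing which minors to apply in each configuration to push the exponent matrix into staircase shape; you instead apply an arbitrary straightening move whenever one is available and prove termination via the potential $\Phi(p)=\sum_{i,j}p_{i,j}\,ij$, which strictly drops by $(i'-i)(j'-j)$ at each step and is bounded below. Your computation checks out, and the argument is shorter and avoids the case analysis, at the cost of not exhibiting a canonical reduction order (which the paper's Gr\"obner-basis theorem later supplies anyway). For part (2), the paper simply observes that each straightening move preserves degree and row/column sums, which implicitly assumes the congruence $p\equiv q$ is realized by a chain of such moves; your argument via the $\ZZ^m\times\ZZ^n$-multigrading --- $I_2(X)$ is multihomogeneous, and it contains no monomial since every monomial is nonzero at the rank-one all-ones matrix --- handles an arbitrary relation $p-q\in I_2(X)$ directly and is, if anything, more airtight than the paper's one-line justification. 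Your closing remark that one could instead fix a diagonal term order and read both parts off the normal form is essentially the strategy the paper adopts in its Theorem on Gr\"obner bases, so the two approaches reconnect there.
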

\begin{proof}

Let $p$ be a monomial in $k[X]$, identified with its exponent matrix $(p_{i,j})$. The key mechanic at work here is that when $a < b$ and $c < d$, modulo the minor $x_{a,c} x_{b,d} - x_{a,d} x_{b,c}$, the monomial $p$ is equivalent to the monomial $p'$ with exponent matrix $(p'_{i,j})$ where $$p'_{i,j} = \begin{cases} p_{i,j} & (i,j) \neq (a,c), (b,d), (b,c), (a,d) \\ 
p_{a,c} + 1 & (i,j) = (a,c)\\
p_{b,d} + 1 & (i,j) = (b,d)\\
p_{a,d} - 1 & (i,j) = (a,d)\\
p_{b,c} - 1 & (i,j) = (b,c)\\
 \end{cases}$$ From this the second claim is immediate as modifying monomials using these determinants clearly preserves all listed characteristics.

To prove the first claim, a simple induction on the number of columns allows us to assume that any monomial in correspondence to the augmented $m \times n$-matrix $[(p_{i,j})_{ 1 \leq j \leq n-1} | 0 ]$ is equivalent modulo $I_2(X)$ to a staircase monomial, that is modulo $I_2(X)$ we may assume $p$ has the staircase shape for the first $n-1$ columns. Set $i$ to be the smallest row index so that $p_{i, n-1} \neq 0$. We now induce on the row indicies $i'$ such that $i \leq i'$ and $p_{i',n} \neq 0$. If there are none or if the only one is $i' = i$, then $p$ is a already staircase monomial. Otherwise, assume by induction that $p$ is equivalent modulo minors to a staircase monomial in the support of the last column of exponent matrix is in rows $1$ through $i' -1$ and that $p_{i', n} \neq 0$

If $p_{i', j} = 0$ for all $j$, then up to a multiple of the minor $x_{i, n-1} x_{i', n} - x_{i', n-1} x_{i,n}$, $p$ is equivalent to a monomial with $p_{i, n-1} p_{i', n} = 0$ and the rest follows by induction. Otherwise, we may set $j$ to be the largest column index for which $p_{i',j} \neq 0$. If the value $p_{i', n}$ is larger than $\sum p_{k,\ell}$ where $i \leq k \leq i', j \leq \ell \leq n$ but $(k,\ell) \neq (i',n)$, then using appropriate minors, one may assume that $p$ is equivalent to a monomial with the same support as $p$ outside of the ranges $i \leq k \leq i'$ and $j \leq \ell \leq n$ but where $p_{k, \ell} = 0$ for $i \leq k \leq i'$ and $j \leq \ell \leq n$ unless either $k = i'$ or $j = n$, i.e., is a staircase monomial. If $\sum p_{k,\ell}$ is smaller than $p_{i',n}$, then again we may use minors to reduce $p$ modulo $I_2(X)$ to a monomial which is staircase up to the last column but for which $p_{i', n} = 0$ and the rest follows by induction. 
\end{proof}

\begin{thm}\label{thm:mainGB}
Fix $X$ a generic $m \times n$-matrix and $\fm$ the homogeneous maximal ideal in $k[X]$. For $I$ an $\fm$-primary monomial ideal in $k[X]$, the set of staircase monomials not in $I$ forms a $k$-basis for $k[X]/(I_2(X) + I)$. 
\end{thm}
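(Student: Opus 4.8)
The plan is to prove the two halves of the basis property separately — that the staircase monomials outside $I$ span $k[X]/(I_2(X)+I)$, and that their images are $k$-linearly independent. Throughout I would fix a monomial order $<$ on $k[X]$ under which the leading term of each $2\times 2$ minor $x_{a,c}x_{b,d}-x_{a,d}x_{b,c}$ (with $a<b$, $c<d$) is the ``diagonal'' monomial that the reduction of Lemma~\ref{lem:minordivis} eliminates; a weight order refining that partial reduction, or a suitable lexicographic order, will do. With this choice the monomials fixed by the reduction are exactly the staircase monomials, so the whole statement becomes a standard-monomial statement for an initial ideal.

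Spanning is immediate from Lemma~\ref{lem:minordivis}(1): every monomial of $k[X]$ is congruent modulo $I_2(X)$ to a staircase monomial, and a staircase monomial lying in $I$ maps to $0$ in $k[X]/(I_2(X)+I)$. Hence the images of the staircase monomials not in $I$ already span the quotient, and it remains only to establish linear independence.

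For independence I would show that the minors together with the monomial generators of $I$ form a Gr\"obner basis of $I_2(X)+I$ with respect to $<$. Granting this, the standard monomials — those divisible by no leading term — are precisely the monomials divisible by no diagonal $x_{a,c}x_{b,d}$ (equivalently, the staircase monomials, which is the classical description of the standard monomials of $I_2(X)$ and recovers the $I=0$ case) that in addition do not lie in $I$; and standard monomials always descend to a $k$-basis of the quotient. To obtain the Gr\"obner property by Buchberger's criterion I would sort the $S$-polynomials into three types: those among the minors (a Gr\"obner basis by the classical determinantal theory underlying \cite{MS13,RS15}), those among the monomial generators of $I$ (which reduce to zero automatically, the generators of a monomial ideal being their own leading terms), and the mixed $S$-polynomials pairing a minor with a monomial generator of $I$.

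The mixed $S$-polynomials are where the real content lies, and I expect this to be the main obstacle: reducing them to zero is exactly the assertion that $\operatorname{in}_<(I_2(X)+I)=\operatorname{in}_<(I_2(X))+I$, i.e.\ that reducing a monomial of $I$ to staircase form can never produce a staircase monomial outside $I$. Here the combinatorial structure of $I$ must genuinely be used, and I would exploit Lemma~\ref{lem:minordivis}(2): every reduction step preserves all row sums, all column sums, and the total degree, so it suffices to track a single diagonal-to-antidiagonal swap applied to a generator of $I$ and to verify, using the multigrading by row and column sums under which both $I_2(X)$ and the monomial ideal $I$ are homogeneous, that the swapped monomial still lies in $I$. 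Once this membership is secured, every mixed $S$-polynomial reduces to zero, the Gr\"obner property and the identification of the standard monomials follow, and the staircase monomials not in $I$ are seen to form the claimed $k$-basis.
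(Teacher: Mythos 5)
Your reduction to Buchberger's criterion and your isolation of the mixed $S$-polynomials as the crux are sound, and the spanning half via Lemma~\ref{lem:minordivis}(1) matches the paper. The gap is exactly in the step you flag as the real content: the claim that a diagonal-to-antidiagonal swap applied to a monomial of $I$ lands back in $I$ does not follow from multigraded homogeneity, and it is false at this level of generality. Every monomial ideal is homogeneous for the row-and-column-sum multigrading, but homogeneity only says each graded piece $I_\mu$ is a subspace of $k[X]_\mu$, not that membership in $I$ is determined by $\mu$. Concretely, take $m=n=2$ and $I=(x_{1,1},\,x_{1,2}^2,\,x_{2,1}^2,\,x_{2,2}^2)$, which is an $\fm$-primary monomial ideal. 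Then $x_{1,1}x_{2,2}\in I$ while its swap $x_{1,2}x_{2,1}\notin I$, although the two have identical row and column sums. The mixed $S$-polynomial of the minor with the generator $x_{1,1}$ is $\pm x_{1,2}x_{2,1}$, a standard monomial for your proposed generating set, so it does not reduce to zero and your Gr\"obner claim fails. Worse, the statement itself fails for this $I$: in $k[X]/(I_2(X)+I)$ one has $x_{1,2}x_{2,1}\equiv x_{1,1}x_{2,2}\equiv 0$, so the quotient has dimension $6$, while there are $8$ staircase monomials outside $I$. No argument can close this gap without an additional hypothesis on $I$, e.g.\ that membership of a monomial in $I$ is determined by its degree and row and column sums, or at least that $I$ is closed under staircase reduction.

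For comparison, the paper routes around the mixed $S$-polynomials differently: it first treats $I=\fm^t$, replacing the obvious monomial generators by the set of \emph{staircase} monomials of degree $t$ (membership in $\fm^t$ depends only on total degree, which every swap preserves, so those $S$-polynomials do reduce), and then passes to general $\fm$-primary $I$ by deleting from the resulting basis the elements lying in $I$. That deletion step silently requires the same closure property your argument needs, as the example above shows, so the hypothesis is genuinely necessary; the basis actually used downstream is the one in Corollary~\ref{cor:GBMaxIdeal}, whose conditions are phrased in terms of degree and row/column sums rather than literal divisibility by the generators of $\fm^{\lceil sq\rceil}+\fm^{[q]}$. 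If you restrict to ideals whose monomial membership is a function of degree and row/column sums, your multigrading idea becomes correct and your proof goes through; as written, it does not.
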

\begin{proof}

It suffices to show the claim for $I = \fm^t$ for fixed $t$ as the theorem follows by noting that when $I$ is $\fm$-primary, $\fm^t \subset I$ for some power $t$ and a $k$-basis for $k[X]/(I_2(X) + I)$ is obtained by eliminating from a $k$-basis of $k[X]/(I_2(X) + \fm^t)$ the elements which lie in $I$.  

Set $T \subset k[X]$ the ideal generated by all staircase monomials of degree $t$. Following the proof of \cite[Thm. 2.4]{MS13}, the theorem follows immediately once we've show that $G = I_2(X) + T \subset I_2(X) + \fm^t$ is a Gr\"obner basis. The equality $I_2(X) + T = I_2(X) + \fm^t$ follows by the first claim of Lemma~\ref{lem:minordivis}. To finish the proof, one needs only check, via the Buchberger algorithm, that $S$-polynomials $S(f,g)$ for any generators $f$ and $g$ of $I_2(X) + T$. This is immediately trivial unless $f$ is a determinant, $g$ is a staircase monomial, and their leading terms share a variable in common. The rest of the check is straightforward and follows by repeating the same case by case analysis as in the proof of \cite[Thm. 2.2]{MS13}. 
\end{proof}

\begin{rmk}
In the special case that $I = \fm^{[q]}$ one notes that the basis guaranteed by Theorem~\ref{thm:mainGB} agrees with base guaranteed by \cite[Thm. 2.2]{MS13} as the set of staircase monomials which are not in $I$ are precisely those not divisible by any $q$-stair monomials. 
\end{rmk}

\begin{cor}\label{cor:GBMaxIdeal}
Fix $X$ a generic $m \times n$-matrix and $\fm$ the homogeneous maximal ideal in $k[X]$. For any positive integer $s$, a $k$-basis for $\fm^{ \lceil sq \rceil} + \fm^{[q]} + I_2(X)$ consists of staircase monomials of degree at most $\lceil sq \rceil$ and having either all row sums smaller than $q$ or all column sums column sums smaller than $q$. 
\end{cor}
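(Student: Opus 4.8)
The plan is to obtain the basis as an immediate application of Theorem~\ref{thm:mainGB} to the $\fm$-primary monomial ideal $I := \fm^{\lceil sq\rceil} + \fm^{[q]}$, and then to translate the resulting condition ``staircase monomial not lying in $I_2(X)+I$'' into the two stated numerical conditions. By that theorem the staircase monomials not lying in $I_2(X)+I$ (i.e.\ those with nonzero image in $k[X]/(I_2(X)+I)$) form a $k$-basis, so everything reduces to deciding, for a staircase monomial $b$, exactly when $b \in I_2(X)+I$.

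First I would separate the two summands of $I$ using homogeneity. Every generator of $I_2(X)+I$ is homogeneous: the $2\times 2$ minors have degree $2$, the generators of $\fm^{\lceil sq\rceil}$ have degree $\lceil sq\rceil$, and those of $\fm^{[q]}$ have degree $q$. Consequently, for a staircase monomial $b$ of degree $D$, if $D \ge \lceil sq\rceil$ then $b\in\fm^{\lceil sq\rceil}\subseteq I$ and $b$ is discarded, whereas if $D<\lceil sq\rceil$ the degree-$D$ graded piece of $\fm^{\lceil sq\rceil}$ vanishes, so comparing graded pieces of the homogeneous ideal $I_2(X)+I$ gives $b\in I_2(X)+I$ if and only if $b\in I_2(X)+\fm^{[q]}$. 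This isolates the two requirements: the basis elements are precisely the staircase monomials of degree less than $\lceil sq\rceil$ (equivalently, not lying in $\fm^{\lceil sq\rceil}$) that do not lie in $I_2(X)+\fm^{[q]}$.

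Next I would invoke the remark immediately preceding this corollary: a staircase monomial lies in $I_2(X)+\fm^{[q]}$ exactly when it is divisible by a $q$-stair monomial. It then remains to establish the elementary combinatorial equivalence that a monomial $b$ is divisible by a $q$-stair monomial if and only if $b$ has some row sum at least $q$ and some column sum at least $q$ (this needs no staircase hypothesis). The forward direction is immediate: if a $q$-stair monomial $w$ with corner $(c,d)$ divides $b$, then the row-$c$ sum and the column-$d$ sum of $b$ each dominate the corresponding sum $q$ of $w$. For the converse, given a row $c$ with sum $\ge q$ and a column $d$ with sum $\ge q$, I would construct a dividing $q$-stair monomial $w$ supported on row $c$ and column $d$ by trimming $b$: writing $R$ and $C$ for the off-corner masses of $b$ in row $c$ and column $d$ and $x=b_{c,d}$ for the corner entry, the two hypotheses give $x\ge q-\min\{R,C\}$, so one may set the shared corner value $w_{c,d}=\max\{0,\,q-\min\{R,C\}\}\le x$ and then distribute the remaining mass $q-w_{c,d}=\min\{q,\min\{R,C\}\}$ along each arm within $b$. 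Negating this equivalence converts ``not divisible by a $q$-stair monomial'' into ``all row sums smaller than $q$ or all column sums smaller than $q$'', which is exactly the condition in the statement.

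The one genuinely interpretive point, and where I expect the most care, is that ``not in $I$'' in Theorem~\ref{thm:mainGB} must be read as ``not in $I_2(X)+I$'' rather than as non-membership in the monomial ideal $I$: a $q$-stair monomial may spread its mass so that no single exponent reaches $q$, whence it does not lie in $\fm^{[q]}$ as a monomial ideal yet does lie in $I_2(X)+\fm^{[q]}$, and it is excluded from the basis only under the former reading. Beyond fixing this convention, the sole computational step is the trimming construction in the converse of the combinatorial lemma, with the corner-cell bookkeeping governed by $x\ge q-\min\{R,C\}$; the homogeneity splitting and the appeal to the preceding remark are formal. Assembling the three reductions—Theorem~\ref{thm:mainGB}, the homogeneity separation, and the $q$-stair characterization together with its row/column reformulation—yields that the described staircase monomials are exactly a $k$-basis.
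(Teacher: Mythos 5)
Your overall route is the paper's own: the corollary is stated without an explicit proof and is meant to follow from Theorem~\ref{thm:mainGB} applied to $I=\fm^{\lceil sq\rceil}+\fm^{[q]}$ together with the preceding remark identifying the excluded staircase monomials as those divisible by a $q$-stair monomial. Your homogeneity splitting and your reading of ``not in $I$'' as ``not in $I_2(X)+I$'' are both correct, and the latter is a genuine clarification of a point the paper leaves ambiguous. The one real error is your parenthetical claim that the equivalence ``$b$ is divisible by a $q$-stair monomial $\iff$ $b$ has a row sum $\ge q$ and a column sum $\ge q$'' needs no staircase hypothesis; correspondingly, your trimming construction never checks that the monomial $w$ it produces is a \emph{staircase} monomial, which is part of the definition of a stair monomial (Definition~\ref{defstaircase}). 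Without the staircase hypothesis the equivalence is false: take $m=n=3$, $q$ even, and $b$ with exponent matrix having $b_{1,2}=b_{3,2}=b_{2,1}=b_{2,3}=q/2$ and all other entries $0$. Row $2$ and column $2$ each sum to $q$, and any $q$-stair divisor would be forced to have corner $(2,2)$ and all four of those entries equal to $q/2$; but that support contains both $(1,2)$ and $(2,3)$, violating the staircase condition, so $b$ has no $q$-stair divisor even though it meets your row/column criterion.

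The gap is easily repaired in the only case you actually need: the monomials $b$ being tested are staircase monomials, since they come from the basis of Theorem~\ref{thm:mainGB}, and any divisor of a staircase monomial is again staircase (the defining condition $p_{i,j}\,p_{i',j'}=0$ for $i<i'$, $j<j'$ only becomes easier to satisfy when exponents decrease). With that one added observation your constructed $w$, being a divisor of $b$ supported on row $c$ and column $d$ with both sums equal to $q$, is automatically a $q$-stair monomial and the argument closes. You should delete the claim that no staircase hypothesis is needed and restrict the combinatorial lemma to staircase $b$.
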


It now suffices for us to turn our attention at carefully counting the $k$-basis of $\fm^{ \lceil sq \rceil} + \fm^{[q]} + I_2(X)$. The computation of $\lambda\left(\frac{k[X]}{\mathfrak{m}^{\lceil sq \rceil}+\mathfrak{m}^{[q]}+I_2(X)}\right)$ follows an expected combinatorial argument. Following the techniques in \cite{RS15}, we explain how to give a precise enough monomial count. 

In the rest of the paper we will be operating under the assumptions that $s\in \ZZ[p^{-1}]$ and $q$ is large enough that $sq\in \ZZ$.  We do this because we are primarily interested in establishing that the length function is polynomial in $q$. It is unreasonable to expect such behavior for $s\notin \ZZ[p^{-1}]$.  As a simple example, consider the ring $R=\FF_2[x,y]$ and let $\fm=(x,y)$ and $s=\frac{4}{3}$.  For any $e\in \NN$, we have that
$$\lceil sp^e\rceil=\left\lceil \frac{2^{e+2}}{3}\right\rceil =\begin{cases} \frac{2^{e+2}+1}{3} & \text{if }e\text{ is odd}\\ \frac{2^{e+2}+2}{3} & \text{if }e\text{ is even.}\end{cases}$$
From this we can easily compute the length function in question:
$$\lambda(R/\fm^{\lceil sp^e\rceil}+\fm^{[p^e]})=\begin{cases} \frac{7}{9}p^{2e}+\frac{5}{9}p^e-\frac{2}{9}  &\text{if }e>0\text{ is odd}\\\frac{7}{9}p^{2e}+\frac{7}{9}p^e-\frac{5}{9}  &\text{if }e>0\text{ is even.}\end{cases}$$
This example shows that even in the simplest cases, we cannot expect the length function to be equal to a single polynomial when $s\notin\ZZ[p^{-1}]$. 

\section{Combinatorics} We utilize the convention that $\binom{m}{n}=0$ if $n<0$, $m<n$, or $m<0$.  Unspecified summations are over all integers. We are interested in counting staircase monomials with restricted row and column sums. Using \cite[Lem. 2.4]{RS15}, it suffices to count $(m+n)$-tuples $(x_1,\ldots, x_m,y_1,\ldots, y_n)\in \ZZ_{\geq 0}^{m+n}$, where we interpret the $x_i$'s as row sums and $y_j$'s as column sums of the associated exponent matrix to the staircase monomial. This forces the condition $\sum_i x_i = \sum_j y_j$ and the lemma, {\it loc.\ cit.}, gives a bijection between such tuples and staircase monomials. Thus by Corollary~\ref{cor:GBMaxIdeal}, to calculate the length of $k[X]/(I_2(X) + \fm^{ \lceil sq \rceil} + \fm^{[q]})$, it suffices to count all $(m+n)$-tuples $(x_1,\ldots, x_m,y_1,\ldots, y_n)\in \ZZ_{\geq 0}^{m+n}$ such that $\sum_i x_i = \sum_j y_j < \lceil sq \rceil$ and either all $x_i < q$ for all $1 \leq i \leq m$ or all $y_j < q$ for all $1 \leq j \leq q$. There is a natural symmetry to this requirement which we exploit via an inclusion-exclusion type argument. To this end, we introduce two symbols $T$ and $U$ which count monomials meeting relevant conditions.

\begin{dff} Fix $m, n,r$ and $q$ in $\NN$. Let $T(m,n,r,q)$ be the number of $m+n$-tuples, \newline $(x_1,\ldots, x_m,y_1,\ldots, y_n)\in \ZZ_{\geq 0}^{m+n}$ such that 

$$\sum_i x_i=\sum_j y_j<r \text{ and } x_i<q \text{  for all }1 \leq i \leq m.$$

Also let $U(m,n,r,q)$ of all $m+n$-tuples $(x_1,\ldots, x_m,y_1,\ldots, y_n) \in \ZZ_{\geq 0}^{m+n}$ such that 
$$\sum_i x_i=\sum_j y_j<r \text{, for all }i, x_i<q \text{, and for all } j, y_j < q.$$ 
\end{dff}

\begin{rmk} The functions $T(m,n,r,q)$ and $U(m,n,r,q)$ were utilized in Eto and Yoshida's calculation of the Hilbert-Kunz multiplicity of the determinantal ring defined by $2$-minors, viewed as the Segre product of two polynomial rings \cite{Eto02, EY03}. Specifically, for $X$ an $m\times n$-matrix of variables, the ring $k[X]/I_2(X)$ is isomorphic to $k[z_1,\ldots,z_m] \# k[y_1,\ldots,y_n]$. From \cite[Rmk. 2.5]{MS13}, one expresses the length using the following monomial counts. Set $\alpha_{m,d}$ the number of monomials in $k[z_1,\ldots,z_m]$ of total degree $d$ and $\alpha_{m,d,q}$ the number of monomials in $k[z_1,\ldots,z_m]$ of total degree $d$ and $z_i$-degree at most $q$ for all $i$, and similarly for $k[y_1,\ldots,y_m]$. From \cite[Rmk. 2.5]{MS13} one expresses the length via inclusion-exclusion
$$\lambda( k[X] / (I_2(X) + \fm^{[q]}) = \sum_{d=0}^{ (q-1)n} \alpha_{m,d} \alpha_{n,d,q} + \sum_{d=0}^{(q-1)m} \alpha_{n,d}\alpha_{m,d,q} - \sum_{d = 0}^{(q-1)m} \alpha_{n,q,d} \alpha_{m,d,q}.$$
Immediately one has 
$$\lambda( k[X] / (I_2(X) + \fm^{ \lceil sq \rceil } + \fm^{[q]})) =  \sum_{d=0}^{ \lceil sq \rceil } \alpha_{m,d} \alpha_{n,d,q} + \sum_{d=0}^{ \lceil sq \rceil } \alpha_{n,d}\alpha_{m,d,q} - \sum_{d = 0}^{\lceil sq \rceil } \alpha_{n,q,d} \alpha_{m,d,q}.$$ The functions $T(n,m,r,q)$ and $U(n,m,r,q)$ arise from exploiting the correspondence in \cite[Lem. 2.4]{RS15} between monomials and tuples. 
\end{rmk}

Our goal is to give a closed form for $T(m,n,r,q)$ and $U(m,n,r,q)$. We start with a helpful auxiliary combinatorial identity.

\begin{lem}\label{product of binom coeff} For $a,b,c\in\NN$, 
$\displaystyle{\sum_{w=0}^{\infty} \binom{c+w}{a+b}\binom{a}{w}\binom{b}{w}=\binom{c}{a}\binom{c}{b}}$.
\end{lem}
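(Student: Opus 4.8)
The plan is to reduce the identity to repeated applications of the Vandermonde convolution $\sum_k \binom{p}{k}\binom{q}{r-k} = \binom{p+q}{r}$ together with the subset-of-a-subset (``committee'') identity $\binom{p}{k}\binom{k}{j} = \binom{p}{j}\binom{p-j}{k-j}$, treating $a$ and $b$ as fixed and peeling off the three binomial factors one at a time. Throughout one leans on the stated convention that $\binom{m}{n}=0$ outside the range $0\le n\le m$, so that every sum is finite and the reindexings below are legitimate.

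First I would break up the only factor that mixes $c$ and $w$, namely $\binom{c+w}{a+b}$, by Vandermonde: $\binom{c+w}{a+b} = \sum_k \binom{c}{a+b-k}\binom{w}{k}$. Substituting this and exchanging the order of summation isolates an inner sum over $w$ that no longer involves $c$, namely $\sum_w \binom{w}{k}\binom{a}{w}\binom{b}{w}$. Using $\binom{w}{k}\binom{a}{w} = \binom{a}{k}\binom{a-k}{w-k}$ and then a single Vandermonde step, this inner sum collapses to $\binom{a}{k}\binom{a+b-k}{a}$.

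This leaves $\sum_k \binom{c}{a+b-k}\binom{a}{k}\binom{a+b-k}{a}$. Reindexing by $m = a+b-k$ and applying the committee identity $\binom{c}{m}\binom{m}{a} = \binom{c}{a}\binom{c-a}{m-a}$ lets me pull $\binom{c}{a}$ outside the sum, and a final Vandermonde convolution over the remaining index evaluates what is left to $\binom{c}{b}$, giving $\binom{c}{a}\binom{c}{b}$ as desired.

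The computations are all routine; the only place demanding care is the bookkeeping. Because the summation variable ranges over all integers and several binomial coefficients can formally acquire negative top or bottom entries after each shift, I would verify at each stage that the convention forces the out-of-range terms to vanish, so that each Vandermonde identity is applied to a genuinely finite, correctly-aligned sum. As a cross-check (and an alternative proof), the sum can be recognized as a balanced ${}_3F_2(1)$ with a terminating parameter $-a$, whereupon Pfaff--Saalsch\"utz evaluates it in closed form and a short factorial simplification returns $\binom{c}{a}\binom{c}{b}$.
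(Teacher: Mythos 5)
Your argument is correct: each of the three reductions checks out (the inner sum $\sum_w \binom{w}{k}\binom{a}{w}\binom{b}{w}$ does collapse to $\binom{a}{k}\binom{a+b-k}{a}$ via trinomial revision and one Vandermonde step, and the final two steps cleanly extract $\binom{c}{a}$ and then $\binom{c}{b}$), and your attention to the vanishing convention is exactly the right place to be careful, since the only delicate points are terms where $a-k<0$ or $c-a<0$, and in each such case a vanishing prefactor ($\binom{a}{k}$ or $\binom{c}{a}$) kills the term before the out-of-range Vandermonde is invoked. However, your route is genuinely different from both proofs in the paper. The paper first gives a Wilf--Zeilberger proof: it exhibits a certificate $G(w,a)$ satisfying a telescoping identity whose sum over $w$ yields the first-order recurrence $0=(a-c)H(a)+(1+a)H(a+1)$ for $H(a)=\sum_w F(w,a)$, which is then solved against the initial value $H(0)=\binom{c}{b}$. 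It then gives a second, bijective proof matching chains of $a$ red and $b$ blue colored integers in $[c]$ (counted by $\binom{c}{a}\binom{c}{b}$) with $4$-tuples $(w,A,B,C)$ (counted by the left-hand side), the bijection encoding the ``rb-pairs'' and stable rb-pairs of a chain. Your proof is arguably the most elementary and self-contained of the three, requiring nothing beyond classical convolution identities; the WZ proof is mechanical and certifiable but opaque, while the bijection explains \emph{why} the identity holds combinatorially and is what the authors lean on for intuition. Your closing remark that the sum is a terminating balanced ${}_3F_2(1)$ evaluable by Pfaff--Saalsch\"utz is also accurate and would serve as an independent one-line verification.
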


We offer two proofs of this statement. The first is based on the Zeilberger-Wilf algorithm. The second is a more elaborate combinatorial proof which realizes a bijection between two sets each of which obviously having cardinalities both sides of this identity. 

\begin{proof}(of Lemma~\ref{product of binom coeff})
Set $F(w,a) = \binom{c+w}{a+b}\binom{a}{w}\binom{b}{w}$ and $$G(w,a) = \frac{ w^2( -a-b+c+w) }{ (1+a+b) ( -1 -a + w) } F(w,a).$$ One may immediately verify the identity \begin{equation}\label{eqZW}G(w+1,a) - G(w,a) = (a-c) F(w,a) + (1+a) F(w, a+1).\end{equation} Setting $H(a) := \sum_w F(w,a)$ and summing \eqref{eqZW} over $w$ we have $0 = (a-c) H(a) + (1+a) H(a+1),$ and thus $$H(a) = \frac{ (c-a+1) \cdots (c) }{ a! } H(0) = \binom{c}{a} \sum_{w}^0 \binom{c+w}{b} \binom{b}{w} = \binom{c}{a} \binom{c}{b}.$$ 
\end{proof}

Next, we give a stronger combinatorial proof of Lemma~\ref{product of binom coeff}. We introduce some notation only used for this proof. For $n\in\mathbf{N}$, let $[n]$ denote the set $\{1,2,\ldots, n\}$.  By a \emph{colored integer} we mean an element of $[c]\times\{\text{red},\text{blue}\}$.  We call the first component of a colored integer its \emph{value} and we call the second component its \emph{color}. We impose an order on the set of colored integers by declaring that $\text{red} < \text{blue}$ and using the lexicographic order. In particular, $x<x'$ if either the value of $x$ is less than the value of $x'$, or their values are equal, $x$ is red, and $x'$ is blue. For example, $2 \times \text{blue} < 3 \times \text{red}$ and $5 \times \text{red} < 5 \times \text{blue}.$ By a \emph{chain of type $(a,b)$} we mean a chain of colored integers $x_1<\cdots <x_{a+b}$ containing $a$ red integers and $b$ blue integers. Such a chain is determined completely by the values of the red integers and the values of the blue integers, and so the number of chains of type $(a,b)$ is $\binom{c}{a}\binom{c}{b}$.

\begin{proof}(of Lemma~\ref{product of binom coeff}) We show that the number of chains of type $(a,b)$ is equal to the number of $4$-tuples $(w,A,B,C)$, where $w$ is an integer, $A\subseteq [a]$ has size $w$, $B\subseteq [b]$ has size $w$, and $C\subseteq [c+w]$ has size $a+b$. The latter set clearly has size $\sum_{w=0}^{\infty}\binom{c+w}{a+b}\binom{a}{w}\binom{b}{w}$ and so finding such a bijection immediately establishes the desired equality. All sets involved are totally ordered, so we utilize the notation $\{ i_1 < \ldots < i_n \}$ for a set of natural numbers $i_1, \ldots, i_n$ ordered as indicated.

We first define a function $\varphi$ from the set of all chains of type $(a,b)$ to the set of $4$-tuples. To do so, we need one more piece of terminology. We call a consecutive pair of colored integers $(x_i,x_{i+1})$ with $x_i < x_{i+1}$ an \emph{rb-pair} if $x_i$ is red and $x_{i+1}$ is blue. Note by the ordering there are two types of rb-pairs, those with equal value and those with differing values. We call an rb-pair \emph{stable} provided the values in the pair agree. 

Let $X$ be a chain of type $(a,b)$ consisting of colored integers $x_1<\cdots <x_{a+b}$.  Let $\{i_1<\cdots <i_a\}$ be the set of indices of red integers in $X$ and let $\{j_1<\cdots <j_b\}$ be the set of indices of blue integers in $X$.  We first encode the rb-pairs.  Set $A=\{\ell : (x_{i_\ell}, x_{i_\ell + 1}) \text{ is an rb-pair}\}$ and similarly $B=\{\ell : (x_{j_\ell-1},x_{j_\ell}) \text{is an rb-pair}\}$.  Clearly  $A\subseteq [a]$ and $B\subseteq [b]$, and $\# A = \# B$.  

We now produce the tuple $(w,A,B,C)$. The sets $A$ and $B$ have already been defined and both have cardinality $w$. It suffices now to construct $C$. This will encode both the values of the chain and the locations of the stable rb-pairs. Set $C'\subseteq [c]$ be the set of values of the elements of $X$. To capture the location of the stable rb-pairs in a recoverable manner, we write $A = \{ s_1 < \ldots < s_w \}$ and set 
 $$C''=\{\ell : \text{the rb-pair starting with } x_{s_\ell} \text{ is stable}\}.$$  One may check that $\# C'+ \# C''=a+b$. Setting $C=C'\cup\{c+\ell : \ell\in C''\}\subseteq [c+w]$, we have produced from the given chain $X$ of type $(a,b)$ a tuple $\varphi(X) := (w,A,B,C)$. Its clear that $\varphi$ is injective. In particular, two chains $X$ and $X'$ have $\varphi(X) = \varphi(X')$, then they must have the same value set as both are recovered by $C \cap [c]$, and the same locations of rb-pairs as both are determined by $A$ and $B$, as well as the same locations of stable rb-pairs as these locations are determined by $C \setminus (C \cap [c])$. All this data completely determines the red and blue colored integers in the chains $X$ and $X'$, hence they are the same chain.

We now check that $\varphi$ is surjective. Fix $(w,A,B,C)$ a $4$-tuple of the desired form. Write $A=\{i_1<\cdots <i_w\}\subseteq [a]$ and $B=\{j_1<\cdots <j_w\}\subseteq [b]$. Also decompose $C'=C\cap [c]=\{c_1<\cdots <c_{a+b-f}\}$ and let $C''=\{s_1<\cdots <s_f\}$ where $C\cap [c+1,c+j]=\{c+s_1<\cdots < c+s_f\}$. 
We now build a chain $X=(x_1<\cdots <x_{a+b})$ of type $(a,b)$ with $\varphi(X) = (w,A,B,C)$. To determine the chain we first construct the coloring, that is we describe a sequence of $a+b$ colored buckets into which we will place values. This is determined by the sets $A$ and $B$. Color the first $j_1-1$ buckets blue, then the next $i_1$ buckets red, the next $j_2-j_1$ buckets blue, the next $i_2-i_1$ buckets red, and so on. Finishing this, the sequence may be too short, however we know that there will be no more rb-pairs, so we fill in with the remaining number of blue buckets, then the remaining number of red buckets. 

Now it suffices to fill in the values. The coloring has been chosen so that the $i_\ell$th red bucket is part of an rb-pair for  $1\leq \ell \leq w$, and similarly the $j_\ell$th blue bucket is part of an rb-pair. Use the set $C''$ to mark the red component of the stable rb-pairs. Now start placing values in buckets in order, and repeat values on the stable rb-pairs so marked. This produces the chain $X$. To see that $\varphi(X) = (w,A,B,C)$ note that $A$ and $B$ characterize the rb-pairs of $X$ and $C$ consists precisely of the values and the encoded locations of the stable rb-pairs by construction.

\end{proof}

\begin{xmp} Fix $a  = 7, b = 8$, and $c = 15$. As is typical with combinatorial proofs, it is instructive to see the functions in action in an example. Consider the $(7,8)$ chain $$1r < 2r < 3r < 4b < 5r < 5b < 6b < 7b < 8b < 9r < 10r < 10b < 11b < 12r < 13b$$ where we denote a red number $nr$ with value $n$ and a blue number $nb$ with value $n$. Realize this chain as $x_1 < \ldots < x_{15}$ colored integers. 

The red indicies are $\{ 1 < 2 < 3 < 5 < 10 < 11 < 14 \}$ and the blue indicies are $\{ 4 < 6 < 7 < 8 < 9 < 12 < 13 < 15 \}$. Writing the former set as $\{ i_1 < \ldots < i_7 \}$ and the latter as $\{ j_1 < \ldots < j_8 \}$, we have the set $A$ of red subindicies of rb-pairs is $\{ 3 < 4 < 6 < 7 \}$ and $B$ the set of blue subindicies of rb-pairs is $\{ 1 < 2 < 6 < 8 \}$. So $w = 4$. The set $C'$ is the set of values $\{ 1, \ldots, 13\}$. The set $C''$ is the set of those indices in $A$ which arise for stable rb-pairs, in this case $C'' = \{ 2, 3\}$. Shifting these by $c = 15$ we have $C = \{ 1, \ldots, 13, 17, 18 \}$ and we have $\varphi(X) = (4, A, B, C)$. 

Continuing with $a = 7, b= 8$, and $c = 15$, we compute $\psi( 2, A, B, C)$ where $A = \{ 3 < 5 \}$, $B = \{ 1 < 2 \}$, and $C = \{ 1, \ldots, 14,17 \} \subset [17]$.  To calculate $\psi( 2, A, B, C)$ we split $C$ into the honest values $\{ 1, \ldots, 14 \}$ and the index $17 - 15 = 2$ which corresponds to a unique stable rb-pair. 
We first determine the pattern of colors.  Since $B = \{ 1 < 2 \}$, the first blue number is part of an rb-pair, which means the sequence starts with red numbers.  Since $A = \{ 3 < 5 \}$, the third red number is the earliest one that is part of an rb-pair, so our sequence starts $r < r < r < b$.  The second blue number is also part of an rb-pair, and so we must switch back to red numbers until we reach the 5th red number, so our sequence looks like $r < r < r < b < r < r < b$.  There are no more rb-pairs, and so we must finish writing blue numbers and then end with the remaining red numbers: $r < r <r < b <r <r <b <b< b< b< b< b< b < r < r$.  We have that the values of the elements in our chain are nonincreasing, and the 2nd rb-pair is the only stable rb-pair, and the values of the 15 colored integers include all values in $[14]$, hence our chain of type $(7,8)$ is 
\[1r<2r<3r<4b<5r<6r<6b<7b<8b<9b<10b<11b<12b<13r<14r.\] 

\end{xmp}

With Lemma~\ref{product of binom coeff} in hand, we draw out a few immediate consequences, which will be applied in the main counting result, Theorem~\ref{thm:TCount}. 

\begin{cor}\label{sum of products of binom coeff}  For $a,b,c\in\NN$,  $\displaystyle{\sum_{i=0}^c \binom{i}{a}\binom{i}{b}=\sum_{j}\binom{c+j+1}{a+b+1}\binom{a}{j}\binom{b}{j}}$.
\end{cor}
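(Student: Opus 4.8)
The plan is to reduce the statement to Lemma~\ref{product of binom coeff} together with the classical hockey-stick identity. First I would apply Lemma~\ref{product of binom coeff}, with the role of $c$ played by each index $i$, to rewrite every summand on the left-hand side as $\binom{i}{a}\binom{i}{b} = \sum_{w}\binom{i+w}{a+b}\binom{a}{w}\binom{b}{w}$. Summing over $0 \le i \le c$ and interchanging the (finite) order of summation gives
$$\sum_{i=0}^{c}\binom{i}{a}\binom{i}{b} = \sum_{w}\binom{a}{w}\binom{b}{w}\sum_{i=0}^{c}\binom{i+w}{a+b}.$$

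Next I would evaluate the inner sum. By the hockey-stick identity $\sum_{k=0}^{N}\binom{k}{r} = \binom{N+1}{r+1}$, applied at both endpoints after the substitution $k = i+w$, the inner sum telescopes to $\sum_{i=0}^{c}\binom{i+w}{a+b} = \binom{c+w+1}{a+b+1} - \binom{w}{a+b+1}$. Substituting this back produces two terms: the first, after renaming $w$ to $j$, is exactly the claimed right-hand side $\sum_{j}\binom{c+j+1}{a+b+1}\binom{a}{j}\binom{b}{j}$, and the second is a correction term $\sum_{w}\binom{a}{w}\binom{b}{w}\binom{w}{a+b+1}$.

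The only point requiring care is to verify that this correction term vanishes identically. The factor $\binom{w}{a+b+1}$ is nonzero only for $w \ge a+b+1$, while the factor $\binom{a}{w}\binom{b}{w}$ forces $w \le \min\{a,b\}$; since $\min\{a,b\} \le a+b < a+b+1$, no value of $w$ contributes, using the vanishing conventions for binomial coefficients recorded at the start of Section~3. Hence the correction term is zero and the identity follows. I expect this vanishing observation, rather than any manipulation, to be the crux of the argument, since it is precisely what reconciles the plain upper limit $c$ on the left with the shifted binomial $\binom{c+j+1}{a+b+1}$ appearing on the right.
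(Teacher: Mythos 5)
Your proposal is correct and follows essentially the same route as the paper: apply Lemma~\ref{product of binom coeff} with $c$ replaced by $i$, interchange the order of summation, and evaluate the inner sum by the hockey-stick identity. The only difference is that you explicitly verify the vanishing of the lower-endpoint correction term $\sum_{w}\binom{a}{w}\binom{b}{w}\binom{w}{a+b+1}$, a detail the paper leaves implicit; your justification for it is valid.
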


\begin{proof} By Lemma~\ref{product of binom coeff} and the Hockeystick Lemma,
\[\sum_{i=0}^c \binom{i}{a}\binom{i}{b}=\sum_{i=0}^c \sum_{j}\binom{i+j}{a+b}\binom{a}{j}\binom{b}{j}
=\sum_{j}\left(\sum_{i=0}^c\binom{i+j}{a+b}\right)\binom{a}{j}\binom{b}{j}
=\sum_{j}\binom{c+j+1}{a+b+1}\binom{a}{j}\binom{b}{j}.\qedhere\]
\end{proof} 

\begin{cor} \label{general sum of products of binom coeff} For $c,u,u',v,v'\in \NN$,  $\displaystyle{\sum_{i=0}^c \binom{t+i}{u}\binom{v+i}{w}=\sum_{a}\sum_{b}\sum_{j}\binom{t}{u-a}\binom{v}{w-b}\binom{c+j+1}{a+b+1}\binom{a}{j}\binom{b}{j}}$.
\end{cor}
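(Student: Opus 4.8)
The plan is to reduce this identity to Corollary~\ref{sum of products of binom coeff} by first stripping off the shifts $t$ and $v$ using the Chu--Vandermonde convolution, and then applying the previous corollary to the resulting ``unshifted'' sum.

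First I would rewrite each factor on the left-hand side via the Vandermonde convolution in the forms
$$\binom{t+i}{u} = \sum_{a}\binom{t}{u-a}\binom{i}{a}, \qquad \binom{v+i}{w} = \sum_{b}\binom{v}{w-b}\binom{i}{b}.$$
With the sign conventions declared at the start of Section~3 (namely $\binom{m}{n}=0$ when $n<0$, $m<n$, or $m<0$), each of these is a finite sum, so there are no convergence issues. Substituting both expansions into $\sum_{i=0}^{c}\binom{t+i}{u}\binom{v+i}{w}$ and interchanging the (finite) order of summation, I would pull the factors $\binom{t}{u-a}$ and $\binom{v}{w-b}$, which do not depend on $i$, outside the sum over $i$. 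This leaves
$$\sum_{i=0}^{c}\binom{t+i}{u}\binom{v+i}{w} = \sum_{a}\sum_{b}\binom{t}{u-a}\binom{v}{w-b}\left(\sum_{i=0}^{c}\binom{i}{a}\binom{i}{b}\right).$$

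Finally, I would apply Corollary~\ref{sum of products of binom coeff} to the inner sum, replacing $\sum_{i=0}^{c}\binom{i}{a}\binom{i}{b}$ with $\sum_{j}\binom{c+j+1}{a+b+1}\binom{a}{j}\binom{b}{j}$. Inserting this and absorbing the sum over $j$ into the outer summation yields precisely the claimed right-hand side.

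There is no substantial obstacle here; the identity is essentially Corollary~\ref{sum of products of binom coeff} dressed with two Vandermonde convolutions, and every manipulation is a finite rearrangement. The only point requiring a moment of care is verifying that the Vandermonde identities hold in the all-integer summation convention used throughout the paper, and that the ranges of $a$ and $b$ coming from those expansions match the unrestricted sums in the statement; both follow immediately from the vanishing conventions on the binomial coefficients.
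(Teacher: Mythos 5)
Your proof is correct and is essentially the paper's argument: the paper performs the same reduction to Corollary~\ref{sum of products of binom coeff}, merely phrasing the Vandermonde step as extracting the coefficient of $x^uy^w$ from $(x+1)^{t+i}(y+1)^{v+i}$ rather than invoking the convolution identity by name. No gap.
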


\begin{proof} The sum $\sum_{i=0}^c \binom{t+i}{u}\binom{v+i}{w}$ is the coefficient of $x^uy^w$ in the polynomial 
\[\sum_{i=0}^c(x+1)^{i+t}(y+1)^{i+v}=(x+1)^{t}(y+1)^{v}\sum_{i=0}^c(x+1)^{i}(y+1)^i.\]
The coefficient of $x^uy^w$ in the right hand side is, by Corollary \ref{sum of products of binom coeff},
\[\sum_{a}\sum_{b}\binom{t}{u-a}\binom{v}{w-b}\sum_{i=0}^c\binom{i}{a}\binom{i}{b}=\sum_{a}\sum_{b}\sum_{j}\binom{t}{u-a}\binom{v}{w-b}\binom{c+j+1}{a+b+1}\binom{a}{j}\binom{b}{j}.\qedhere\]
\end{proof}

The final ingredient is following lemma, which offers a direct count of the type of tuples we are interested in. Its proof is a direct application of \cite[Lem. 2.5]{RS15}, inclusion-exclusion, and Pascal's identity and left to the reader. 

\begin{lem}\label{lem:UCount} Fix natural numbers $v, d$, and $q$. The number of tuples $(z_1,\ldots, z_v) \in \ZZ_{\geq 0}^v$ with $\sum_i z_i = d$ and $z_i < q$ is $$\sum_i (-1)^i\binom{v}{i}\binom{d-iq+v-1}{v-1}.$$
\end{lem}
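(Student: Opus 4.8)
The plan is to realize the count by inclusion--exclusion, starting from the unrestricted stars-and-bars count and then subtracting off the contributions in which some coordinate violates the bound $z_i < q$. First I would recall from \cite[Lem. 2.5]{RS15} that the number of tuples $(z_1, \ldots, z_v) \in \ZZ_{\geq 0}^v$ with $\sum_i z_i = d$ and no upper-bound constraint is $\binom{d+v-1}{v-1}$; this accounts for the $i = 0$ term of the asserted formula.

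To incorporate the constraint, I would introduce for each subset $S \subseteq \{1, \ldots, v\}$ the set $A_S$ of nonnegative solutions of $\sum_i z_i = d$ satisfying $z_j \geq q$ for every $j \in S$ (with no condition imposed on the remaining coordinates). The key step is the substitution $z_j \mapsto z_j - q$ for $j \in S$, which is a bijection identifying $A_S$ with the set of nonnegative solutions of $\sum_i z_i = d - |S| q$; applying the stars-and-bars count again gives $\# A_S = \binom{d - |S|q + v - 1}{v-1}$, a quantity depending only on $|S|$. The tuples we wish to count are precisely those lying in none of the sets $A_{\{i\}}$, so the standard inclusion--exclusion principle yields
$$\sum_{S \subseteq \{1,\ldots,v\}} (-1)^{|S|} \, \# A_S = \sum_{i} (-1)^i \binom{v}{i} \binom{d - iq + v - 1}{v-1},$$
where the subsets have been grouped by cardinality $i = |S|$, producing the factor $\binom{v}{i}$.

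Finally I would observe that extending the summation over all integers $i$ introduces no spurious terms: $\binom{v}{i} = 0$ for $i < 0$ or $i > v$, and $\binom{d-iq+v-1}{v-1} = 0$ once $d - iq < 0$, both by the conventions fixed at the start of this section. The only genuine content is the bijective substitution and the check that it preserves nonnegativity, which is routine, so there is no serious obstacle. Should one prefer an inductive argument on $v$, one can instead peel off the last coordinate, write the count as $\sum_{z_v = 0}^{q-1}$ of the $(v-1)$-variable count, and collapse the resulting telescoping sum via Pascal's identity; but the inclusion--exclusion route above is the most direct and is what I would carry out.
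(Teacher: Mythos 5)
Your proposal is correct and matches the route the paper itself indicates: the paper leaves this proof to the reader, describing it as a direct application of \cite[Lem. 2.5]{RS15} together with inclusion--exclusion, which is precisely your argument via the sets $A_S$ and the shift $z_j \mapsto z_j - q$. Your closing remarks on the binomial conventions and the alternative inductive route via Pascal's identity are consistent with the conventions fixed at the start of Section 3, so nothing is missing.
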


Armed with this, we obtain a closed form for $T(m,n,r,q)$ and $U(m,n,r,q)$. These closed forms involve the monus operation $a \dotdiv b = \max\{ a -b, 0\}$. 

\begin{thm}\label{thm:TCount} For fixed $d\in\NN$,
\[T(m,n,r,q)=\sum_{i}\sum_{a}\sum_{b}\sum_{j}(-1)^i \binom{m}{i}\binom{m-1}{m-1-a}\binom{iq+n-1}{n-1-b}\binom{r-iq+j}{a+b+1}\binom{a}{j}\binom{b}{j}\] and 
\[U(m,n,r,q)=\sum_i\sum_{j}\sum_{a}\sum_{b}\sum_{\ell}(-1)^{i+j}\binom{m}{i}\binom{n}{j}\binom{(j\dotdiv i)q+m-1}{m-1-a}\binom{(i \dotdiv j)q+n-1}{n-1-b}\binom{r-\max\{i,j\}q+\ell}{a+b+1}\binom{a}{\ell}\binom{b}{\ell}\]
\end{thm}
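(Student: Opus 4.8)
The plan is to stratify the tuples by their common value $d = \sum_i x_i = \sum_j y_j$, which ranges over $0 \le d < r$. For each fixed $d$ the row-coordinates and the column-coordinates are chosen independently, so the count factors as a product of two one-sided counts. For $T$, the constrained rows contribute $\sum_i (-1)^i \binom{m}{i}\binom{d - iq + m-1}{m-1}$ by Lemma~\ref{lem:UCount} with $v = m$, while the unconstrained columns contribute the stars-and-bars count $\binom{d+n-1}{n-1}$; for $U$ the columns are also constrained, contributing $\sum_j (-1)^j \binom{n}{j}\binom{d - jq + n-1}{n-1}$ by Lemma~\ref{lem:UCount} with $v = n$. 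Summing the product over $0 \le d < r$ and pulling the inclusion-exclusion index (or indices) outside the finite sum over $d$ reduces both formulas to evaluating an inner sum of the shape $\sum_{d=0}^{r-1}\binom{d - iq + m-1}{m-1}\binom{d - jq + n-1}{n-1}$, where for $T$ one takes $j = 0$ so that the second factor is the unconstrained column count $\binom{d+n-1}{n-1}$.

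First I would dispatch this inner sum. The key observation is that, under our binomial convention, $\binom{d - iq + m-1}{m-1}$ vanishes for $d < iq$ and the column factor vanishes for $d < jq$, so the effective range of $d$ begins at $\max\{i,j\}q$. Setting $e = d - \max\{i,j\}q$ re-indexes the sum to run over $0 \le e \le r - 1 - \max\{i,j\}q$, and the two binomials become $\binom{e + (\max\{i,j\}-i)q + m-1}{m-1}$ and $\binom{e + (\max\{i,j\}-j)q + n-1}{n-1}$. This is exactly where the monus enters: $\max\{i,j\} - i = j \dotdiv i$ and $\max\{i,j\} - j = i \dotdiv j$ are both nonnegative, so the shifted top arguments are genuine naturals and Corollary~\ref{general sum of products of binom coeff} applies, with internal index renamed $\ell$ and parameters $c = r - 1 - \max\{i,j\}q$, $t = (j\dotdiv i)q + m-1$, $v = (i\dotdiv j)q + n-1$, $u = m-1$, $w = n-1$. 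The upper-limit bookkeeping $c + \ell + 1 = r - \max\{i,j\}q + \ell$ produces precisely the binomial $\binom{r - \max\{i,j\}q + \ell}{a+b+1}$ appearing in the claimed closed forms.

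Assembling the pieces then yields both formulas. For $U$ one retains both indices and the prefactor $(-1)^{i+j}\binom{m}{i}\binom{n}{j}$, giving the stated sum over $i,j,a,b,\ell$. For $T$ one specializes the inner sum to $j = 0$, so that $j \dotdiv i = 0$, $i \dotdiv j = i$, and $\max\{i,j\} = i$; the row factor $\binom{(j\dotdiv i)q + m-1}{m-1-a}$ collapses to $\binom{m-1}{m-1-a}$, the column factor becomes $\binom{iq + n-1}{n-1-b}$, and the last binomial becomes $\binom{r - iq + \ell}{a+b+1}$, recovering the stated expression for $T$ (with $\ell$ playing the role of the index written $j$ there).

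I expect the genuine obstacle to be the boundary bookkeeping rather than the algebra. When $\max\{i,j\}q \ge r$ the inner sum is empty and must vanish on the closed-form side; this forces me to check that $\binom{r - \max\{i,j\}q + \ell}{a+b+1}$ is zero under the convention whenever $c = r - 1 - \max\{i,j\}q < 0$. This does hold, because the factors $\binom{a}{\ell}\binom{b}{\ell}$ confine $\ell$ to $0 \le \ell \le \min\{a,b\}$, whence $c + \ell + 1 \le \ell \le \min\{a,b\} < a+b+1$. The same estimate shows that Corollary~\ref{general sum of products of binom coeff}, whose generating-function proof is written for $c \in \NN$, remains valid with both sides equal to zero when $c < 0$; confirming this extension, together with the vanishing that licenses the re-indexing, is the one step that truly requires care.
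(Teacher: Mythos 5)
Your proposal is correct and follows essentially the same route as the paper: stratify by the common sum $d$, count each side with Lemma~\ref{lem:UCount} and stars-and-bars, shift the index of summation by $\max\{i,j\}q$, and invoke Corollary~\ref{general sum of products of binom coeff} with exactly the parameters the paper uses (the paper merely splits the $U$ case into $i\geq j$ and $i<j$ rather than writing the monus from the start, and treats $T$ directly rather than as the $j=0$ specialization). Your explicit check that the closed form vanishes when $r-1-\max\{i,j\}q<0$ is a boundary detail the paper leaves implicit, and it is handled correctly.
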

\begin{proof}

Both claims are proved using similar techniques. 
By Lemma~\ref{lem:UCount}, the number of $m$-tuples $(x_1,\ldots,x_m)$ such that $\sum_i x_i = d$ and $x_i<q$ for all $i$ is $$\displaystyle{\sum_i(-1)^i \binom{m}{i}\binom{d-iq+m-1}{m-1}}$$ and the number of $n$-tuples $(y_1,\ldots,y_n)$ with $\sum_j y_j=d$ is $\binom{d+n-1}{n-1}$.  Therefore, 
\[T(m,n,r,q)=\sum_{d=0}^{r-1}\sum_{i}(-1)^i \binom{m}{i}\binom{d-iq+m-1}{m-1}\binom{d+n-1}{n-1}
=\sum_{i}(-1)^i \binom{m}{i}\sum_{d=0}^{r-1} \binom{d-iq+m-1}{m-1}\binom{d+n-1}{n-1}. \]
Applying Corollary~\ref{general sum of products of binom coeff} with $c=r-iq-1$, $t=u=m-1$,  $v=iq+n-1$, and $w=n-1$, we obtain that

\[\sum_{d=0}^{r-1} \binom{d-iq+m-1}{m-1}\binom{d+n-1}{n-1}
=\sum_{d'=0}^{r-iq-1}\binom{d'+m-1}{m-1}\binom{d'+iq+n-1}{n-1}
=\sum_{a}\sum_{b}\sum_{j}\binom{m-1}{m-1-a}\binom{iq+n-1}{n-1-b}\binom{r-iq+j}{a+b+1}\binom{a}{j}\binom{b}{j}.
\]
Thus,
\[T(m,n,r,q)=\sum_{i}\sum_{a}\sum_{b}\sum_{j}(-1)^i \binom{m}{i}\binom{m-1}{m-1-a}\binom{iq+n-1}{n-1-b}\binom{r-iq+j}{a+b+1}\binom{a}{j}\binom{b}{j}.\]
Similarly, we find an equivalent expression for $U$.  We have that
\[U(m,n,r,q)=\sum_{d=0}^{r-1}\sum_i\sum_{j}(-1)^{i+j}\binom{m}{i}\binom{n}{j}\binom{d-iq+m-1}{m-1}\binom{d-jq+n-1}{n-1}.\]
if $i\geq j$, then letting $c=r-iq-1$, $t=u=m-1$, $v=iq-jq+n-1$, and $w=n-1$, we have that
\begin{align*}
\sum_{d=0}^{r-1}\binom{d-iq+m-1}{m-1}\binom{d-jq+n-1}{n-1}
&=\sum_{d'=0}^{r-iq-1}\binom{d'+m-1}{m-1}\binom{d'+iq-jq+n-1}{n-1}\\
&=\sum_{a}\sum_{b}\sum_{\ell}\binom{m-1}{m-1-a}\binom{iq-jq+n-1}{n-1-b}\binom{r-iq+\ell}{a+b+1}\binom{a}{\ell}\binom{b}{\ell}.\end{align*}
By a symmetric argument, if $i<j$, then
\[\sum_{d=0}^{r-1}\binom{d-iq+m-1}{m-1}\binom{d-jq+n-1}{n-1}
=\sum_{a}\sum_{b}\sum_{\ell}\binom{jq-iq+m-1}{m-1-a}\binom{n-1}{n-1-b}\binom{r-jq+\ell}{a+b+1}\binom{a}{\ell}\binom{b}{\ell}.\]
Therefore,
\[U(m,n,r,q)=\sum_i\sum_{j}\sum_{a}\sum_{b}\sum_{\ell}(-1)^{i+j}\binom{m}{i}\binom{n}{j}\binom{(j\dotdiv i)q+m-1}{m-1-a}\binom{(i \dotdiv j)q+n-1}{n-1-b}\binom{r-\max\{i,j\}q+\ell}{a+b+1}\binom{a}{\ell}\binom{b}{\ell}.\]
\end{proof}

We are now set to put this all together to give a closed form for the desired length function. 

\begin{thm}\label{thm:monomialcount} Fix $k$ a field, $p$ an integer and $q$ a $p$-power. Let $X$ be an $m \times n$-matrix of variables, $\fm$ the homogeneous maximal ideal of $k[X]$ and $I_2(X)$ the ideal of $2 \times 2$-minors. Let $s\in \RR_{>0}$ such that $sq\in \ZZ$, and set
$$R(m,n,s,q) := \sum_{a}\sum_{b}\sum_{\ell}\binom{m-1}{a}\binom{n-1}{b}\binom{sq+\ell}{a+b+1}\binom{a}{\ell}\binom{b}{\ell}$$
 $$S(m,n,s,q) := \sum_{i>0}\sum_{j>0}\sum_{a}\sum_{b}\sum_{\ell}(-1)^{i+j}\binom{m}{i}\binom{n}{j}\binom{(j\dotdiv i)q+m-1}{m-1-a}\binom{(i \dotdiv j)q+n-1}{n-1-b}\binom{(s-\max\{i,j\})q+\ell}{a+b+1}\binom{a}{\ell}\binom{b}{\ell}.$$ The length $\lambda\left(\frac{k[X]}{\mathfrak{m}^{\lceil sq \rceil}+\mathfrak{m}^{[q]}+I_2(X)}\right) = R(m,n,s,q) - S(m,n,s,q)$ is eventually a polynomial in $q$ for all $s$.
\end{thm}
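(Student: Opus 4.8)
The plan is to feed Corollary~\ref{cor:GBMaxIdeal} through inclusion--exclusion, substitute the closed forms of Theorem~\ref{thm:TCount}, and then read off polynomiality term by term. Write $r=\lceil sq\rceil = sq$, using the hypothesis $sq\in\ZZ$. By Corollary~\ref{cor:GBMaxIdeal} and the bijection of \cite[Lem.~2.4]{RS15}, the length counts tuples $(x_1,\dots,x_m,y_1,\dots,y_n)\in\ZZ_{\geq 0}^{m+n}$ with $\sum_i x_i=\sum_j y_j<r$ satisfying (all $x_i<q$) or (all $y_j<q$). Applying inclusion--exclusion to these two events, and expressing the ``all $y_j<q$'' count as $T(n,m,r,q)$ after interchanging the roles of rows and columns, gives
$$\lambda\left(\frac{k[X]}{\fm^{\lceil sq\rceil}+\fm^{[q]}+I_2(X)}\right) = T(m,n,r,q) + T(n,m,r,q) - U(m,n,r,q).$$

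First I would isolate the $i=0$ summand of $T(m,n,r,q)$: using $\binom{m-1}{m-1-a}=\binom{m-1}{a}$ and $\binom{n-1}{n-1-b}=\binom{n-1}{b}$ it is exactly $R(m,n,s,q)$, and the analogous computation shows the $i=0$ summand of $T(n,m,r,q)$ is $R(n,m,s,q)$, which equals $R(m,n,s,q)$ after the relabelling $a\leftrightarrow b$. On the $U$ side the $(i,j)=(0,0)$ summand also collapses to $R(m,n,s,q)$, since both monus terms and $\max\{0,0\}$ vanish. The crux is matching the remaining boundary terms: setting $i=0$ in $U$ forces $(j\dotdiv i)=j$, $(i\dotdiv j)=0$, $\max\{i,j\}=j$, so after the symmetric swap $a\leftrightarrow b$ the $(i=0,\,j>0)$ part of $U$ coincides with the $i>0$ part of $T(n,m,r,q)$; symmetrically the $(i>0,\,j=0)$ part of $U$ equals the $i>0$ part of $T(m,n,r,q)$. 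Substituting these identifications, the two positive and one negative copy of $R$ reduce to a single $R(m,n,s,q)$, the $i>0$ pieces of the two $T$'s cancel the two boundary pieces of $U$, and only the $i>0,\,j>0$ part of $U$ survives. Rewriting $r-\max\{i,j\}q=(s-\max\{i,j\})q$ identifies this with $S(m,n,s,q)$, yielding $\textnormal{length}=R-S$.

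For polynomiality I would argue term by term, first noting that every index range is finite ($0\le i\le m$, $0\le j\le n$, and $a,b,\ell$ bounded by the vanishing convention on binomials). In $R(m,n,s,q)$ each factor $\binom{sq+\ell}{a+b+1}$ is, for large $q$, the value of the fixed degree-$(a+b+1)$ polynomial $\frac{1}{(a+b+1)!}\prod_{t=0}^{a+b}(sq+\ell-t)$ in $q$, so $R$ is a polynomial in $q$. For $S$ I would split on $\max\{i,j\}$: if $\max\{i,j\}>s$ then the top $(s-\max\{i,j\})q+\ell$ of $\binom{(s-\max\{i,j\})q+\ell}{a+b+1}$ is negative for all large $q$ (as $\ell$ is bounded and $s-\max\{i,j\}<0$), so that summand is identically zero for large $q$; if $\max\{i,j\}\le s$, this factor is eventually a polynomial in $q$, and the monus factors $\binom{(j\dotdiv i)q+m-1}{m-1-a}$ and $\binom{(i\dotdiv j)q+n-1}{n-1-b}$ are likewise eventually polynomial. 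A finite sum of functions each eventually polynomial (or eventually zero) is eventually polynomial.

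I expect the main obstacle to be the bookkeeping of the middle paragraph: correctly tracking the monus and $\max$ specializations at $i=0$ or $j=0$ and checking that the $a\leftrightarrow b$ relabelling identifies the $i>0$ parts of the $T$'s with the boundary parts of $U$ \emph{exactly}, so the cancellation is genuine rather than approximate. The polynomiality step is then routine once one observes the summation ranges are finite and the only subtlety is the eventual vanishing of the $\max\{i,j\}>s$ terms.
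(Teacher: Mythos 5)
Your proposal is correct and follows essentially the same route as the paper: reduce via Corollary~\ref{cor:GBMaxIdeal} and inclusion--exclusion to $T(m,n,sq,q)+T(n,m,sq,q)-U(m,n,sq,q)$, substitute Theorem~\ref{thm:TCount}, and observe that the $i=0$ and $j=0$ boundary summands of $U$ reproduce the two $T$'s with the $(0,0)$ term $R$ double-counted, leaving $R-S$. Your term-by-term polynomiality argument (finite index ranges, eventual vanishing when $\max\{i,j\}>s$) is slightly more explicit than what the paper records, but it is the intended justification.
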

\begin{proof}
First use Corollary~\ref{cor:GBMaxIdeal} to give a $k$-basis for the vector space $k[X] / (\mathfrak{m}^{\lceil sq \rceil }+\mathfrak{m}^{[q]}+I_2(X))$ consisting of monomials of bounded degree and with restricted row and column sums. This reduces the calculation to the functions $T$ and $U$. In particular, by applying Lemma~\ref{lem:UCount} and Theorem~\ref{thm:TCount}, we have
\begin{align*}
\lambda\left(\frac{k[X]}{\mathfrak{m}^{sq}+\mathfrak{m}^{[q]}+I_2(X)}\right) =&\; T(m,n,sq,q)+T(n,m,sq,q)-U(m,n,sq,q)\\
=&\sum_{i}\sum_{a}\sum_{b}\sum_{\ell}(-1)^i \binom{m}{i}\binom{m-1}{m-1-a}\binom{iq+n-1}{n-1-b}\binom{(s-i)q+\ell}{a+b+1}\binom{a}{\ell}\binom{b}{\ell}
\\
&+\sum_{j}\sum_{a}\sum_{b}\sum_{\ell}(-1)^j \binom{n}{j}\binom{jq+m-1}{m-1-a}\binom{n-1}{n-1-b}\binom{(s-j)q+\ell}{a+b+1}\binom{a}{\ell}\binom{b}{\ell}\\
&-\sum_i\sum_{j}\sum_{a}\sum_{b}\sum_{\ell}(-1)^{i+j}\binom{m}{i}\binom{n}{j}\binom{(j\dotdiv i)q+m-1}{m-1-a}\binom{(i \dotdiv j)q+n-1}{n-1-b}\binom{(s-\max\{i,j\})q+\ell}{a+b+1}\binom{a}{\ell}\binom{b}{\ell}\\
=&\sum_{a}\sum_{b}\sum_{\ell}\binom{m-1}{a}\binom{n-1}{b}\binom{sq+\ell}{a+b+1}\binom{a}{\ell}\binom{b}{\ell}\\
&-\sum_{i>0}\sum_{j>0}\sum_{a}\sum_{b}\sum_{\ell} (-1)^{i+j}\binom{m}{i}\binom{n}{j}\binom{(j\dotdiv i)q+m-1}{m-1-a}\binom{(i \dotdiv j)q+n-1}{n-1-b}\binom{(s-\max\{i,j\})q+\ell}{a+b+1}\binom{a}{\ell}\binom{b}{\ell}
\end{align*} where the last equality follows as the $i = 0$ summand of $U(m,n,sq,q)$ is precisely $T(n,m,sq,q)$ and the $j = 0$ summand of $U(m,n,sq,q)$ is precisely $T(m,n,sq,q)$ and so they cancel in the sum, except for the summand $\sum_{a}\sum_{b}\sum_{\ell}\binom{m-1}{a}\binom{n-1}{b}\binom{sq+\ell}{a+b+1}\binom{a}{\ell}\binom{b}{\ell}$ which only appears once in $U(m,n,sq,q)$ when $i = 0$ and $j = 0$, but appears twice in $T(m,n,sq,q) + T(n,m,sq,q)$. 

\end{proof}

\subsection{Examples}

Fix $R = k[X]/I_2(X)$ where $X$ is an $m \times n$-matrix. We conclude with a few examples using Theorem~\ref{thm:TCount} and use this to calculate $e_s(R)$ for small values of $m$ and $n$.

\begin{xmp}

Suppose $m = n = 2 $ and assume throughout assume $s \in \ZZ[p^{-1}]$ and so $sq$ is always an integer for $q \gg 0$. We calculate $ \lambda\left(\frac{k[X]}{\mathfrak{m}^{sq}+\mathfrak{m}^{[q]}+I_2(X)}\right)$ by calculating $R(2,2,s,q)$ and $S(2,2,s,q)$. The latter depends on the integer part of $s$. 

We always have $R(2,2,s,q) = \frac{s^3 q^3}{3} + \frac{s^2q^2}{2} + \frac{sq}{6}$. For $s < 1$, $S(2,2,s,q) = 0$ and so $ \lambda\left(\frac{k[X]}{\mathfrak{m}^{sq}+\mathfrak{m}^{[q]}+I_2(X)}\right) = \frac{s^3 q^3}{3} + \frac{s^2q^2}{2} + \frac{sq}{6}$. For $1 \leq s < 2$ we have $$S(2,2,s,q) =  \frac{4}{3} (s-1)^3 q^3 + 2 (s-1)^2 q^2 + \frac{2}{3} (s-1) q.$$ Likewise, for $s \geq 2$ we have $$S(2,2,s,q) =  \left( \frac{s^3}{3} - \frac{4}{3} \right) q^3 + \frac{s^2}{2} q^2 +  \left( \frac{s}{6} + \frac{1}{3} \right) q .$$ Putting this together yields a closed form for the length in question

$$ \lambda\left(\frac{k[X]}{\mathfrak{m}^{sq}+\mathfrak{m}^{[q]}+I_2(X)}\right)  = 
\begin{cases} \frac{s^3 }{3} q^3 + \frac{s^2}{2} q^2 + \frac{s}{6} q & \text{if } 0 < s \leq 1 \\ 
\left(\frac{s^3 }{3} - \frac{4}{3} (s-1)^3 \right) q^3 +  \left( \frac{s^2}{2} - 2 (s-1)^2 \right) q^2 +  \left( \frac{s}{6} - \frac{2}{3} (s-1) \right) q & \text{if } 1 < s \leq 2 \\   
\frac{4}{3}  q^3  - \frac{1}{3}  q & \text{if } s > 2 \end{cases}.$$ 
Recalling that $$2 \cH_s(3) - 2\cH_{s-1}(3) = \begin{cases}  \frac{1}{3} s^3 & \text{if } s < 1 \\ \frac{1}{3} s^3 - \frac{4}{3} (s-1)^3 & \text{if } 1 \leq s < 2 \end{cases}$$ we have shown that $$h_s(\fm) = \lim_{q \to \infty}  \frac{1}{q^3} \lambda\left(\frac{k[X]}{\mathfrak{m}^{ sq }+\mathfrak{m}^{[q]}+I_2(X)}\right)  = \begin{cases} 2 \cH_s(3) - 2\cH_{s-1}(3) & \text{if } s\leq 2 \\ \frac{4}{3} & \text{if } s\geq 2.\end{cases}$$ and thus $$e_s(R)= \begin{cases} 2-\frac{2\cH_{s-1}(3)}{\cH_s(3)} & \text{if } s\leq 2 \\ \frac{4}{3\cH_s(3)} & \text{if } s\geq 2.\end{cases}.$$

\end{xmp}

\begin{xmp}


Now fix $m = 2$ and $n = 3$. We have $R(2,3,s,q) = \frac{s^4}{8} q^4 + \frac{5 s^3}{12} q^3 + \frac{3 s^2}{8} q^2 + \frac{s}{12} q$ and $$S(2,3,s,q)  =  \begin{cases} 
0 & \text{if } 0 < s < 1 \\ 
\frac{3}{4} (s-1)^4 q^4 + \frac{5}{2} (s-1)^3 q^3 + \frac{9}{4} (s-1)^2 q^2 + \frac{1}{2} (s-1) q & \text{if } 1 \leq s < 2 \\   
\frac{1}{4} ( 4s^3 - 9s^2 + 7) q^4 + \frac{1}{4} ( 9s^2 - 9s - 8) q^3 + \frac{1}{4} (5s-1) q^2 + \frac{1}{2} q  & \text{if } 2 \leq s < 3 \\
(\frac{-13}{8} +\frac{s^4}{8}) q^4  + (  \frac{1}{4} + \frac{5}{12} s^3 ) q^3 + (\frac{1}{8} + \frac{3}{8} s^2) q^2 + ( \frac{1}{4} + \frac{s}{12} ) q   & \text{if } 3 \leq s \end{cases}$$ and we have 
the length $ \lambda\left(\frac{k[X]}{\mathfrak{m}^{sq}+\mathfrak{m}^{[q]}+I_2(X)}\right)$ is given by $$\begin{cases} 
\frac{s^4}{8} q^4 + \frac{5 s^3}{12} q^3 + \frac{3 s^2}{8} q^2 + \frac{s}{12} q & \text{if } 0 < s < 1 \\ 
( \frac{s^4}{8} - \frac{3}{4} (s-1)^4 )q^4 + ( \frac{5 s^3}{12} - \frac{5}{2} (s-1)^3 ) q^3 + ( \frac{3 s^2}{8} - \frac{9}{4} (s-1)^2 )q^2 +  (\frac{s}{12}  - \frac{1}{2} (s-1) ) q & \text{if } 1 \leq s < 2 \\   
( \frac{s^4}{8} - \frac{1}{4} ( 4s^3 - 9s^2 + 7) ) q^4 + ( \frac{5 s^3}{12} - \frac{1}{4} ( 9s^2 - 9s - 8) ) q^3 + (\frac{3 s^2}{8} -  \frac{1}{4} (5s-1) ) q^2 + ( \frac{s}{12}  - \frac{1}{2} )q  & \text{if } 2 \leq s < 3 \\
\frac{13}{8}  q^4  -  \frac{1}{4}  q^3 - \frac{1}{8} q^2 - \frac{1}{4} q   & \text{if } 3 \leq s \end{cases}.$$ 

We have that $$h_s(\fm) = \lim_{q \to \infty} \frac{1}{q^4}  \lambda\left(\frac{k[X]}{\mathfrak{m}^{sq}+\mathfrak{m}^{[q]}+I_2(X)}\right) = \begin{cases} 
\frac{s^4}{8}  & \text{if } 0 < s < 1 \\ 
( \frac{s^4}{8} - \frac{3}{4} (s-1)^4 )  & \text{if } 1 \leq s < 2 \\   
( \frac{s^4}{8} - \frac{1}{4} ( 4s^3 - 9s^2 + 7) )   & \text{if } 2 \leq s < 3 \\
\frac{13}{8}  & \text{if } 3 \leq s \end{cases}.$$ In terms of the normalizing factors $\cH_s$ one may write this as 

\[e_s(R)=\begin{cases} 3 & \text{if } 0 < s < 1 \\ 3-\frac{6\cH_{s-1}(4)}{\cH_s(4)} & \text{if } 1\leq s < 2\\ \frac{13}{8\cH_s(4)}-\frac{\cH_{3-s}(4)+s\cH_{3-s}(3)}{\cH_s(4)} & \text{if } 2\leq s < 3 \\ \frac{13}{8\cH_s(4)} &\text{if } 3 \leq s \end{cases}.\]

\end{xmp}

\end{document}